\newtheorem{theorem}{Theorem}
\newtheorem{proposition}[theorem]{Proposition}
\theoremstyle{definition}
\theoremstyle{remark}
\newcommand{\PSL}{{\mathrm {PSL}}}
\newcommand{\Ker}{\operatorname{Ker}}
\newcommand{\Aut}{{\mathrm {Aut}}}
\newcommand{\Irr}{{\mathrm {Irr}}}
\newcommand{\St}{{\mathrm {St}}}
\newcommand{\bC}{{\mathbf{C}}}
\newcommand{\bN}{\mathbf{N}}
\newcommand{\Sy}{\textup{\textsf{S}}}
\newcommand{\rat}{{\mathrm {rat}}}
\newcommand{\ncf}{{\mathrm {ncf}}}
\newcommand{\Oinfty}{{\mathcal{O}_\infty}}
\begin{document}

\title[The character degree ratio]
{The character degree ratio and composition factors of a finite
group}

\thanks{Hung N. Nguyen was partially supported by the NSA Young
  Investigator Grant \#H98230-14-1-0293 and a Faculty Scholarship
  Award from Buchtel College of Arts and Sciences, The University
  of Akron}

\author{Mark L. Lewis}
\address{Department of Mathematical Sciences, Kent State University, Kent,
OH 44242, USA} \email{lewis@math.kent.edu}

\author[Hung Ngoc Nguyen]{Hung Ngoc Nguyen}
\address{Department of Mathematics, The University of Akron, Akron,
Ohio 44325, USA} \email{hungnguyen@uakron.edu}

\subjclass[2010]{Primary 20C15}

\keywords{Finite groups, character degrees, degree ratio,
composition factors}

\date{\today}

\begin{abstract} For a finite non-abelian group $G$ let $\rat(G)$ denote the largest ratio
of degrees of two nonlinear irreducible characters of $G$. We prove
that the number of non-abelian composition factors of $G$ is bounded
above by $1.8\ln(\rat(G))+1.3$.
\end{abstract}

\maketitle

%%%%%%%%%%%%%%%%%%%%%%%%%%%%%%%%%%%%%%%%%%%%%%%%%%%%%%%%%%%%%%%%%%%%%%

\section{Introduction}

For a finite non-abelian group $G$ let $\rat(G)$ be the largest
ratio of degrees of two nonlinear irreducible characters of $G$.
That is,
\[\rat(G)=\frac{b(G)}{c(G)},\] where $b(G)$ denotes the largest degree of an (ordinary) irreducible
character of $G$ and $c(G)$ denotes the minimum degree of a
nonlinear irreducible character of $G$. This ratio is often referred
to as the \emph{character degree ratio} of $G$. When $G$ is abelian,
we adopt a convention that $c(G)=1$ and $\rat(G)=1$.

The main result of this note shows that the number of non-abelian
composition factors of an arbitrary finite group is logarithmically
bounded above by its character degree ratio.

\begin{theorem}\label{theorem-main} The number of non-abelian composition factors of a
finite group $G$ is bounded above by $1.8\ln(\rat(G))+1.3$.
\end{theorem}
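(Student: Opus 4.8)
The plan is to induct on $|G|$, removing a minimal normal subgroup at each step and exploiting the fact that the degree ratio cannot grow under passage to quotients. The first thing I would record is the monotonicity $\rat(G/N) \le \rat(G)$ for every $N \trianglelefteq G$: inflation embeds $\Irr(G/N)$ into $\Irr(G)$ and sends nonlinear characters to nonlinear characters of the same degree, so $b(G/N) \le b(G)$ while $c(G/N) \ge c(G)$. Now choose a minimal normal subgroup $N$ of $G$. If $N$ is abelian then $G$ and $G/N$ have exactly the same non-abelian composition factors, and the desired bound for $G$ follows immediately from the inductive hypothesis applied to $G/N$ together with the monotonicity above. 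So the genuine content is the case where $N = S^k$ is a direct product of copies of a non-abelian simple group $S$, on which $G$ acts transitively.

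For that case the heart of the matter is a multiplicative estimate $\rat(G) \ge 2^k \rat(G/N)$. To bound $b(G)$ from below I would invoke the classification-based fact that $S$ has a nontrivial irreducible character $\psi$, necessarily of degree at least $2$, that extends to $\Aut(S)$ (the Steinberg character for groups of Lie type, and the restriction of the standard character for alternating groups, supply such $\psi$). Then $\theta = \psi^{\otimes k} \in \Irr(N)$ is $G$-invariant, and since $\psi$ extends to $\Aut(S)$ the character $\theta$ extends to $\Aut(S)\wr\Sy_k \cong \Aut(N) \supseteq G/C_G(N)$, hence to some $\hat\theta \in \Irr(G)$ by inflation through $G/C_G(N)$. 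Gallagher's theorem then makes $\hat\theta\lambda$ irreducible for every $\lambda \in \Irr(G/N)$, and taking $\lambda$ of maximal degree gives $b(G) \ge \theta(1)\,b(G/N) \ge 2^k b(G/N)$. When $G/N$ is non-abelian one also has $c(G) \le c(G/N)$ directly, so $\rat(G) \ge 2^k\rat(G/N) \ge e^{k/1.8}\rat(G/N)$; since $n(G) = k + n(G/N)$ and $1/\ln 2 < 1.8$, the inductive inequality closes with room to spare.

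The only configuration not covered by this clean step is $G/N$ abelian, where $c(G/N)=1$ by convention. Here every non-abelian composition factor lies inside $N$, so $n(G)=k$, and I would instead bound $c(G)$ from above by hand. Applying Clifford theory to $\theta' = \psi' \otimes 1 \otimes \cdots \otimes 1$, with $\psi' \in \Irr(S)$ of degree $c(S)$, the $G$-orbit of $\theta'$ has size at most $k\,|\Out(S)|$, so $G$ has a nonlinear character of degree at most $k\,|\Out(S)|\,c(S)$. Combined with $b(G) \ge b(N) = b(S)^k$ this yields $\rat(G) \ge b(S)^k/(k\,|\Out(S)|\,c(S))$; because $b(S) \ge 5$ for every non-abelian simple $S$, this grows quickly enough in $k$ to force $k \le 1.8\ln\rat(G)+1.3$, with the additive constant $1.3$ precisely absorbing the small values of $k$ where the exponential growth has not yet dominated.

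The main obstacle I anticipate is the uniform simple-group input. The estimate $b(G) \ge 2^k b(G/N)$ rests on the existence of an $\Aut(S)$-extendible nontrivial irreducible character for every non-abelian simple $S$, which requires the classification and a family-by-family analysis; and the abelian-quotient case requires controlling the Clifford orbits so that $c(G)$ remains only polynomially large in $k$, while comparing $b(S)$, $c(S)$, and $|\Out(S)|$ uniformly. Calibrating all of these simple-group estimates so that the \emph{single} pair of constants $(1.8,1.3)$ works simultaneously in every case, including the small boundary configurations, is where the quantitative difficulty lies.
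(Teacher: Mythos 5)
Your overall strategy is genuinely different from the paper's: you induct on $|G|$ and try to close the inductive step with the multiplicative estimate $\rat(G)\geq \psi(1)^k\rat(G/N)$, thereby avoiding the deep input the paper relies on (the bound $b(G)^3\geq$ product of orders of non-abelian composition factors, Theorem~\ref{theorem-Cossey...}). The inductive step when $G/N$ is non-abelian is essentially correct: $c(G)\leq c(G/N)$ by inflation, $b(G)\geq \psi(1)^k b(G/N)$ by Gallagher applied to an extension of $\psi^{\otimes k}$, and $1.8\ln 2>1$ closes the recursion. That part is a clean and arguably more elementary alternative to Propositions~\ref{proposition-main2} and~\ref{proposition-main3}.

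The genuine gap is in your terminal case $G/N$ abelian, in two places. First, the claim that $G$ has a nonlinear irreducible character of degree at most $k\,|\Out(S)|\,c(S)$ does not follow from the orbit count alone. If $T$ is the stabilizer of $\theta'=\psi'\otimes 1\otimes\cdots\otimes 1$, Clifford theory gives characters of $G$ above $\theta'$ of degree $|G:T|\,f\,\theta'(1)$, where $f$ is the degree of an irreducible projective representation of $T/N$ with respect to a possibly nontrivial cocycle; $f$ need not be $1$ and can be as large as $|T/N|^{1/2}$, which is not controlled by $k$, $|S|$, or $|\Out(S)|$. A character of degree $c(S)$ has no reason to extend to its inertia group. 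This is precisely why the paper works with the Steinberg character (or, for $S\not\cong\PSL_2(q)$, with the $\Aut(S)$-extendible pair $\alpha,\beta$ of Theorem~\ref{theorem-extending-character}): $\Aut(S)$-extendibility forces extendibility to the full stabilizer, so the Clifford correspondent has degree exactly $|G:T|\,\theta'(1)$ with no projective correction. If you repair this by replacing $\psi'$ with an $\Aut(S)$-extendible character, its degree is in general larger than $c(S)$, and your numerics change. Second, the concluding assertion that ``$b(S)\geq 5$'' makes $b(S)^k/(k\,|\Out(S)|\,c(S))$ grow fast enough is not a proof and is not implied by that inequality alone: the quantity you must beat is $c(S)\,|\Out(S)|$, which for $S=\PSL_2(p^f)$ is of order $qf$ while $b(S)=q+1$, so for small $k$ the inequality $k\leq 1.8\ln\rat(G)+1.3$ is genuinely tight and requires a case analysis over the simple groups. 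The paper's Proposition~\ref{proposition-main} does exactly this calibration for $\PSL_2(q)$ (obtaining $c(G)\leq kq$ and $b(G)\geq q^k$, hence $k<\ln\rat(G)+1.3$), and Theorem~\ref{theorem-extending-character} supplies the uniform estimate $\alpha(1)/\beta(1)>|S|^{1/14}$ needed for all other $S$. Until you substitute comparable classification-based inputs, the abelian-quotient case of your induction is not established.
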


In \cite[Theorem C]{Isaacs2}, I.\,M.~Isaacs has shown that if $G$ is
solvable then the derived length of $G$ is bounded by
$3+4\log_2(\rat(G))$. Our Theorem~\ref{theorem-main} can be
considered as a \emph{nonsolvable} version of Isaacs's result.
Theorem~\ref{theorem-main} also significantly improves a result of
J.\,P.~Cossey and the second author \cite[Theorem~A]{Cossey-Nguyen}
that if $S$ a non-abelian composition factor of $G$ different from
the simple linear groups $\PSL_2(q)$, then the number of times that
$S$ occurs as a composition factor of $G$ is bounded in terms of
$\rat(G)$. We refer the reader to
\cite{Cossey-Nguyen,Lewis,Maroti-Nguyen,Moreto-Nguyen} for more
discussion on the influence of the character degree ratio and
character degrees in general on the structure of finite groups.

Let us now describe some ideas in the proof of the main result. Let
$\Oinfty(G)$ denote the solvable radical of $G$, i.e. $\Oinfty(G)$
is the maximal solvable normal subgroup of $G$. Then
$\Oinfty(G/\Oinfty(G))=1$ and the number of non-abelian composition
factors of $G$ equals to that of $G/\Oinfty(G)$. Moreover, as
$c(G/\Oinfty(G))\geq c(G)$ and $b(G/\Oinfty(G))\leq b(G)$, we have
$\rat(G/\Oinfty(G))\leq \rat(G)$. Therefore in the proof of
Theorem~\ref{theorem-main} we can assume that $\Oinfty(G)=1$. It
follows that if $N$ be a minimal normal subgroup of $G$, then $N$ is
isomorphic to a direct product of copies of a non-abelian simple
group $S$.

One of the key results we need is \cite[Theorem~1]{Cossey-Nguyen},
which asserts that if $S$ is a nonabelian simple group not
isomorphic to $\PSL_2(q)$, then $S$ has two non-principal
irreducible characters $\alpha$ and $\beta$ which are extendible to
$\Aut(S)$ such that $\alpha(1)/\beta(1)>|S|^{1/14}.$ This allows us
to produce two character degrees of $G$ of large ratio, and to
obtain a better bound $\ncf(G)<1.8\ln(\rat(G))$ in the case
$S\ncong\PSL_2(q)$. At this point and from now on, we write
$\ncf(G)$ to denote the number of non-abelian composition factors of
$G$.

It turns out that the case $S\cong\PSL_2(q)$ is exceptional and
raises some complications. We first show that the number of copies
of $S$ in $N$ is bounded above by $\ln(\rat(G))+1.3$ and this solves
Theorem~\ref{theorem-main} when $G/N$ is solvable. When $G/N$ is
nonsolvable, we use another key result of J.\,P.~Cossey, Z.~Halasi,
A.~Mar\'{o}ti, and H.\,N.~Nguyen
\cite[Theorem~6]{Cossey-Halasi-Maroti-Nguyen} on upper bound for the
product of the orders of the non-abelian composition factors of a
finite group $G$ in terms of its largest character degree.

To end this introduction, we would like to make a couple of remarks.
Firstly, though the bound obtained is of the right order of
magnitude as shown by characteristically simple groups, we think
that the constants $1.8$ and $1.3$ can be improved and it would be
interesting to find the correct bounding constants. Indeed, we know
of no finite groups with $\ncf(G)\geq \ln(\rat(G))+1$. Secondly,
since the available proofs of \cite[Theorem~1]{Cossey-Nguyen} and
\cite[Theorem~6]{Cossey-Halasi-Maroti-Nguyen} both depend on the
classification of finite simple groups, Theorem~\ref{theorem-main}
depends on the classification as well.

%%%%%%%%%%%%%%%%%%%%%%%%%%%%%%%%%%%%%%%%%%%%%%%%%%%%%%%%%%%%%%%%%%%%%%%%

\section{The simple linear groups $\PSL_2(q)$}

We start with the following result, which implies
Theorem~\ref{theorem-main} in the case $S\cong \PSL_2(q)$ and $G/N$
is solvable.

\begin{proposition}\label{proposition-main} Let $S=\PSL_2(q)$ where $q\geq 5$ is a prime power.
Assume that $N:=S\times\cdots\times S$, a direct product of $k$
copies of $S$, is a minimal normal subgroup of $G$. Then
$k<\ln(\rat(G))+1.3$.
\end{proposition}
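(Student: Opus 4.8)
The plan is to bound $\rat(G) = b(G)/c(G)$ from below by exhibiting one large irreducible degree and one small nonlinear irreducible degree of $G$, and then to verify an elementary inequality. The engine of the argument is the Steinberg character $\St$ of $S = \PSL_2(q)$, which has degree $q$: since the degrees of $S$ are $1$, $(q\pm1)/2$, $q-1$, $q$, $q+1$, the character $\St$ is the unique irreducible character of degree $q$, hence is fixed by every automorphism of $S$, and it is well known that $\St$ extends to $\Aut(S)$. Throughout I use that, because $N$ is a minimal normal subgroup, $G$ permutes the $k$ simple direct factors of $N$ transitively.

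For the lower bound on $b(G)$, I would take $\Theta := \St \times \cdots \times \St \in \Irr(N)$, of degree $q^k$. For any $\chi \in \Irr(G)$ lying over $\Theta$, Clifford's theorem gives $\chi_N = e \sum_i \Theta_i$, so $\chi(1) \ge \Theta(1) = q^k$; hence $b(G) \ge q^k$.

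For the upper bound on $c(G)$, consider $\theta := \St \times 1_S \times \cdots \times 1_S \in \Irr(N)$. Because $\St$ and $1_S$ are each invariant under $\Aut(S)$, every $G$-conjugate of $\theta$ is again a Steinberg character in exactly one coordinate and trivial elsewhere; by transitivity of $G$ on the $k$ factors the orbit of $\theta$ has size exactly $k$, so $[G : I_G(\theta)] = k$, where $I := I_G(\theta)$. The crucial step is to extend $\theta$ to $I$: since the factors other than the first centralize the first and thus act trivially on it, one may pull back an extension of $\St$ along the homomorphism $I \to \Aut(S)$ recording the action on the first factor, producing $\widehat\theta \in \Irr(I)$ of degree $q$ with $\widehat\theta$ restricting to $\theta$ on $N$. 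By the Clifford correspondence, $\chi := \widehat\theta^{\,G} \in \Irr(G)$ with $\chi(1) = [G:I]\,\widehat\theta(1) = kq$. As $\chi$ is nonlinear, $c(G) \le kq$.

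Combining the two estimates yields $\rat(G) \ge q^{k}/(kq) = q^{k-1}/k$, so it remains to verify $q^{k-1}/k > e^{\,k-1.3}$ for every prime power $q \ge 5$ and every $k \ge 1$. Taking logarithms and using $\ln q \ge \ln 5$, this reduces to $(\ln 5 - 1)k - \ln 5 + 1.3 > \ln k$, which holds for all $k \ge 1$; the worst case is $k = 2$, $q = 5$ (there $\rat(G)\ge 5/2$ and $\ln(5/2)+1.3 \approx 2.22 > 2$), which is exactly where the constant $1.3$ is needed. I expect the only genuinely delicate point to be the extension of $\theta$ to its inertia group; the remaining steps are standard Clifford theory together with a routine numerical check.
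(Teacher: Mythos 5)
Your proof is correct and follows essentially the same route as the paper: the degree $kq$ obtained by inducing an extension of $\St\times 1\times\cdots\times 1$ from its inertia group, the lower bound $b(G)\geq q^k$ from $\St\times\cdots\times\St$, and the resulting estimate $\rat(G)\geq q^{k-1}/k$ with a numerical check. The one small difference is that for $b(G)\geq q^k$ you invoke Clifford's theorem directly on any character lying over $\St\times\cdots\times\St$, which neatly avoids the paper's appeal to Mattarei's extension lemma for wreath products; your numerical verification is also sound, with the minimum of $(\ln 5-1)k-\ln 5+1.3-\ln k$ over $k\geq 1$ being positive.
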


\begin{proof} Write $N = S_1 \times \cdots \times S_k$ where $S_i\cong S$ for every $i=1,2,...,k$. Let $T = \bN_G (S_1)$, so $|G:T| =
k$. Furthermore $S_1$ can be considered as a subgroup of
$T/\bC_G(S_1)$, which in turn is isomorphic to a subgroup of
$\Aut(S_1)$. Consider the so-called Steinberg character $\St_{S_1}$
of degree $q$ of $S_1$. It is well-known that $\St_{S_1}$ is
extendible to $\Aut(S_1)$ (see \cite{Feit} for instance). Therefore
$\St_{S_1}$ is extended to an irreducible character, say $\psi$, of
$T/\bC_G(S_1)$. It follows that $\St_{S_1}$ is extended to an
irreducible character, say $\chi$, of $T$ whose kernel contains
$\bC_G(S_1)$. Now since $S_2 \times...\times S_k$ is inside
$\Ker(\chi)$, we conclude that the character $\St_{S_1}\times
1_{S_2}\times....\times 1_{S_k}\in\Irr(N)$ is extendible to $T$.

Observe that the stabilizer of $\St_{S_1}\times
1_{S_2}\times....\times 1_{S_k}$ normalizes $S_1$, and
$\St_{S_1}\times 1_{S_2}\times....\times 1_{S_k}$ has $k$ conjugates
under the action of $G$. Thus, $T$ must be the stabilizer of
$\St_{S_1}\times 1_{S_2}\times....\times 1_{S_k}$ in $G$. Applying
Clifford's theorem, we deduce that $\St_{S_1} (1) k=kq$ is a
character degree of $G$.  In particular, $c(G) \leq q k$.

We now show that $b(G)\geq q^k$.  Since $\Aut (S)$ stabilizes
$\St_S$, the product character $\St_S \times\dots \times
\St_S\in\Irr(N)$ is invariant under $\Aut(N) = \Aut(S) \wr \Sy_k$.
Using \cite[Lemma~1.3]{Mattarei} (see also
\cite[Lemma~5]{Bianchi-Lewis}), we deduce that $\St_S \times \dots
\times \St_S$ is extendible to $\Aut(N)$. Since $N$ can be embedded
into $G/\bC_G(N)$ and $G/\bC_G(N)$ embeds into $\Aut(N)$, it follows
that $\St_S\times\dots \times \St_S$ is extendible to $G/\bC_G(N)$
and hence $b(G)\geq \St_S(1)^k=q^k$.

Combining the bounds for $b(G)$ and $c(G)$ together, we obtain
$$
\rat(G)= \frac{b(G)}{c(G)}\geq \frac{q^k}{kq}.
$$
As it is easy to see that $k < 1.445^k$ for all positive integers
$k$, it follows that $\rat (G) > q^k/(q 1.445^k) = q^{k-1}/1.445^k.$
Therefore we obtain $\ln (\rat (G)) > (k-1) \ln q - k (\ln (1.445)$,
which yields
$$
k < \frac {\ln (\rat (G))}{\ln q - \ln 1.445} + \frac {\ln q}{\ln q
- \ln 1.445}.
$$
It is straightforward to check that $\ln q - \ln 1.445 >1$ and $\ln
q/(\ln q-\ln 1.445)<1.3$ for every $q\geq 5$. We now obtain the
required bound $k < \ln (\rat (G)) + 1.3$.
\end{proof}

As mentioned already, Proposition~\ref{proposition-main} implies the
main result in the case $S\cong \PSL_2(q)$ and $G/N$ is solvable. By
making use of the following deep result, we can prove the bound
$\ncf(G)<1.4\ln(\rat(G))$ whenever $S=\PSL_2(q)$ and $G/N$ is
non-abelian.

\begin{theorem}[Theorem 6 of \cite{Cossey-Halasi-Maroti-Nguyen}]\label{theorem-Cossey...} Let
  $G$ be a finite group. Then the product of the orders of the
  non-abelian composition factors of $G$ is at most $b(G)^3$.
\end{theorem}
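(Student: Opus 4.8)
The plan is to induct on $|G|$, exploiting the multiplicativity of the invariant along a minimal normal subgroup. Write $P(G)$ for the product of the orders of the non-abelian composition factors of $G$, so that $P(G)=P(N)P(G/N)$ for any $N\trianglelefteq G$, and recall that $b(G/N)\leq b(G)$ by inflation. Choose a minimal normal subgroup $N$. If $N$ is abelian it contributes no non-abelian composition factor, so $P(G)=P(G/N)\leq b(G/N)^3\leq b(G)^3$ by the inductive hypothesis, and we are done. Thus all the content lies in the case $N=S_1\times\cdots\times S_m$, a direct product of copies of a non-abelian simple group $S$, where $P(G)=|S|^m\,P(G/N)$ and we must find a character degree of $G$ large enough to absorb the factor $|S|^m$.

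In that case I would first invoke the essential simple-group input: every non-abelian simple group $S$ has an irreducible character $\alpha$ extendible to $\Aut(S)$ with $\alpha(1)\geq|S|^{1/3}$. For groups of Lie type one takes $\alpha=\St_S$, whose degree is the order of a Sylow $p$-subgroup; since the number of positive roots is at least the rank, one gets $\St_S(1)^3\geq|S|$, and $\St_S$ is $\Aut(S)$-extendible as already used in Proposition~\ref{proposition-main}. For the alternating and sporadic groups one exhibits a suitable extendible character directly. Setting $\theta=\alpha\times\cdots\times\alpha\in\Irr(N)$, this character is invariant under $\Aut(N)=\Aut(S)\wr\Sy_m$, so by the Mattarei / Bianchi--Lewis extendibility lemma used in the proof of Proposition~\ref{proposition-main} it extends to $\Aut(N)$. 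Restricting along the embedding $G/\bC_G(N)\hookrightarrow\Aut(N)$ and inflating then produces an extension $\psi\in\Irr(G)$ of $\theta$ with $\psi(1)=\alpha(1)^m$.

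Now Gallagher's theorem finishes the structural step: as $\beta$ ranges over $\Irr(G/N)$ the products $\psi\cdot\beta$ are distinct irreducible characters of $G$, so taking $\beta$ of degree $b(G/N)$ gives $b(G)\geq\psi(1)\,b(G/N)=\alpha(1)^m\,b(G/N)$. Cubing and using $\alpha(1)^3\geq|S|$ together with the inductive bound $b(G/N)^3\geq P(G/N)$ yields
\[
b(G)^3\geq\alpha(1)^{3m}\,b(G/N)^3\geq|S|^m\,P(G/N)=P(G),
\]
which closes the induction.

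The step I expect to be the main obstacle is the simple-group lemma itself, namely producing for every non-abelian simple $S$ an $\Aut(S)$-extendible irreducible character of degree at least $|S|^{1/3}$. This is precisely where the classification enters: the Steinberg argument handles the groups of Lie type uniformly, but simultaneously verifying the degree bound \emph{and} $\Aut(S)$-extendibility for the alternating and sporadic groups requires separate case analysis, and the clean exponent $3$ may force a careful choice of character in a handful of small cases. Everything else—the inductive skeleton, the reduction to a non-abelian minimal normal subgroup, and the Gallagher multiplication—is routine once this input is secured.
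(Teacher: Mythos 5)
This statement is quoted from \cite{Cossey-Halasi-Maroti-Nguyen} and is not proved anywhere in the present paper, so there is no internal proof to compare you against; I can only assess your argument on its own terms. Your reduction is sound and is, in outline, the natural (and, to my knowledge, the actual) route to this result: induct on $|G|$ through a minimal normal subgroup $N$, dispose of the abelian case by inflation and the multiplicativity of the product of composition factor orders, and in the case $N\cong S\times\cdots\times S$ ($m$ copies, $S$ non-abelian simple) use an $\Aut(S)$-extendible $\alpha\in\Irr(S)$ with $\alpha(1)^3\geq |S|$ to manufacture, via the wreath-product extendibility lemma of \cite{Mattarei} and Gallagher's theorem, an irreducible character of $G$ of degree $\alpha(1)^m\,b(G/N)$; cubing then closes the induction exactly as you write it. (Your extension step is fine: the restriction of the $\Aut(N)$-extension to $G/\bC_G(N)$ is irreducible because its further restriction to $N\cong N\bC_G(N)/\bC_G(N)$ is the irreducible $\theta$.) The one genuine gap is the simple-group lemma itself, which you assert but do not prove. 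For groups of Lie type your Steinberg computation is correct: with $N_+$ positive roots and rank $r$ one has $|S|<q^{2N_++r}\leq q^{3N_+}=\St_S(1)^3$, and $\St_S$ extends to $\Aut(S)$ by \cite{Feit}. But for alternating groups you must actually produce a non-self-conjugate partition $\lambda\vdash n$ with $\chi_\lambda(1)^3\geq n!/2$ (so that $\chi_\lambda$ restricts irreducibly to $\Al_n$ and extends to $\Sy_n$, with $n=6$ handled as $\PSL_2(9)$), and the twenty-six sporadic groups and the Tits group require individual verification from their character tables. That case analysis is precisely where the classification-dependent content of the theorem resides; until it is supplied, what you have is a correct and well-organized reduction of the theorem to that lemma rather than a complete proof.
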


\begin{proposition}\label{proposition-main2} Let $S=\PSL_2(q)$ where $q\geq 5$ is a prime power.
Assume that $N:=S\times\cdots\times S$, a direct product of $k$
copies of $S$, is a minimal normal subgroup of $G$ such that $G/N$
is non-abelian. Then $\ncf(G)<1.4\ln(\rat(G))$.
\end{proposition}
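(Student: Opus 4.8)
The plan is to combine the two key inputs: the local bound on $k$ coming from Proposition~\ref{proposition-main}, and the global bound on the product of composition factor orders from Theorem~\ref{theorem-Cossey...}. Since $N$ contributes exactly $k$ copies of $S$ to the composition series, and the remaining non-abelian composition factors all come from the quotient $G/N$, I would write $\ncf(G) = k + \ncf(G/N)$ and estimate each summand separately in terms of $\rat(G)$.

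For the first summand, Proposition~\ref{proposition-main} already gives $k < \ln(\rat(G)) + 1.3$, so this term is under control. The real work is in bounding $\ncf(G/N)$. Here I would invoke Theorem~\ref{theorem-Cossey...} applied to $G$ (or to $G/N$): the product of the orders of the non-abelian composition factors of $G$ is at most $b(G)^3$. Since every non-abelian composition factor is a non-abelian simple group and hence has order at least the order of the smallest non-abelian simple group $\Al_5$, which has order $60$, each such factor contributes a multiplicative factor of at least $60$ to this product. Therefore $60^{\ncf(G)} \leq b(G)^3$, giving $\ncf(G) \leq 3\ln(b(G))/\ln(60)$. To turn this into a bound in terms of $\rat(G) = b(G)/c(G)$ rather than $b(G)$ alone, I need a lower bound on $c(G)$; since $N \cong S^k$ with $S = \PSL_2(q)$ and $q \geq 5$, the group $G$ has nonlinear irreducible characters, and in fact the smallest nonlinear character degree of $G$ can be bounded below. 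The proof of Proposition~\ref{proposition-main} already shows $c(G) \leq qk$, but for a lower bound on $b(G)/b(G)$-type manipulations I would instead relate $b(G)$ directly to $\rat(G)$ using that $c(G)$ is at least the smallest nonlinear character degree of $S$, which for $\PSL_2(q)$ is $(q-1)/2$ or $(q\pm1)/2$.

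The main obstacle, I expect, will be getting the constant down to $1.4$ rather than settling for something larger. A crude application of $60^{\ncf(G)} \leq b(G)^3$ together with $b(G) \geq c(G)\,\rat(G)$ and a lower bound on $c(G)$ gives a bound of the form $\ncf(G) \leq A\ln(\rat(G)) + B$, but the constants $A$ and $B$ coming from $3/\ln(60)$ and the $c(G)$ estimate must be combined with the $k < \ln(\rat(G)) + 1.3$ bound and checked to collapse into the single clean inequality $\ncf(G) < 1.4\ln(\rat(G))$. I would split into cases according to whether $\ncf(G/N)$ is large or $G/N$ is close to solvable, using Proposition~\ref{proposition-main} to dominate when $k$ is the main contributor and Theorem~\ref{theorem-Cossey...} when the quotient carries most of the composition factors. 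The delicate balancing of these two regimes, together with verifying the numerical inequalities for all prime powers $q \geq 5$, is where the care is needed; the structural argument itself is short.
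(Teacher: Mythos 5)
Your decomposition $\ncf(G)=k+\ncf(G/N)$ and the appeal to Theorem~\ref{theorem-Cossey...} match the paper's strategy, but the two specific estimates you propose are not strong enough (and one is false), so the plan does not close. First, importing $k<\ln(\rat(G))+1.3$ from Proposition~\ref{proposition-main} is too weak: combined with any bound of the form $\ncf(G/N)\leq (3/\ln 60)\ln(\rat(G))\approx 0.733\ln(\rat(G))$, you get at best $\ncf(G)<1.733\ln(\rat(G))+1.3$, which is never at most $1.4\ln(\rat(G))$; no case split between the two regimes can rescue a leading coefficient above $1.4$ together with a positive additive constant. The whole point of the hypothesis that $G/N$ is non-abelian is that it buys a strictly better bound on $k$: extending $\St_S\times\cdots\times\St_S$ to some $\chi\in\Irr(G)$ of degree $q^k$ and applying Gallagher's theorem shows that $b(G/N)q^k$ is a character degree of $G$, whence
\[
\rat(G)\;\geq\;\frac{b(G/N)q^k}{c(G)}\;\geq\;\frac{b(G/N)q^k}{c(G/N)}\;\geq\;q^k,
\]
using $b(G/N)\geq c(G/N)$ for the non-abelian group $G/N$. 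This gives $k\leq\ln(\rat(G))/\ln q\leq\ln(\rat(G))/\ln 5\approx 0.622\ln(\rat(G))$, with no additive constant, and $0.622+0.733<1.4$.

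Second, your route to bounding $\ncf(G/N)$ --- applying Theorem~\ref{theorem-Cossey...} to $G$ itself and then lower-bounding $c(G)$ by the smallest nonlinear character degree of $S$, roughly $(q-1)/2$ --- fails because $c(G)$ need not be related to $q$ at all: any nonlinear irreducible character of $G/N$ inflates to one of $G$, so for instance $G=\PSL_2(q)\times Q_8$ (which satisfies the hypotheses of the proposition) has $c(G)=2$ for every $q$. The paper sidesteps this by applying Theorem~\ref{theorem-Cossey...} to $G/N$ rather than to $G$, and by bounding $b(G/N)$ rather than $c(G)$: since $\chi(1)=q^k$ is a character degree of $G$, one has $c(G)\leq q^k$, hence $\rat(G)\geq b(G/N)q^k/q^k=b(G/N)$, and then $60^{\ncf(G)-k}\leq b(G/N)^3\leq\rat(G)^3$. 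Both of these observations are consequences of the same Gallagher extension argument, and they are the missing ideas in your outline.
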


\begin{proof} As in the proof of Proposition~\ref{proposition-main}, the character
$\St_S\times\dots\times \St_S\in\Irr(N)$ is extended to an
irreducible character, say $\chi$, of $G/N$, which can also be
viewed as an irreducible character of $G$. By Gallagher's Theorem
\cite[Corollary~6.17]{Isaacs1}, there is a bijection
$\lambda\leftrightarrow \lambda \chi$ between $\Irr(G/N)$ and the
set of irreducible characters of $G$ lying above
$\St_S\times\dots\times \St_S$. In particular, by taking $\lambda$
to be an irreducible character of $G/N$ of the largest degree, we
deduce that
\[b(G/N)\chi(1)=b(G/N)\St_S(1)^k=b(G/N)q^k\] is an irreducible character
degree of $G$. As it is clear that $c(G/N)\geq c(G)$ and note that
$G/N$ is non-abelian, it follows that
\[\rat(G)\geq \frac{b(G/N)q^k}{c(G/N)}.\] In particular,
\begin{equation}\label{equation1}k\leq \frac{\ln (\rat(G))}{\ln q}.\end{equation}

We now aim to bound $\ncf(G)-k$ in terms of $\rat(G)$. Since $q^k$
is a character degree of $G$, we have $c(G)\leq q^k$ and hence
\[\rat(G)\geq \frac{b(G/N)q^k}{q^k}=b(G/N).\]
Applying Theorem~\ref{theorem-Cossey...}, we deduce that $\rat(G)^3$
is at least the product of the orders of the non-abelian composition
factors of $G/N$.

Note that the number of non-abelian composition factors of $G/N$ is
$\ncf(G)-k$ and the order of each non-abelian composition factor is
at least $60$. It follows that
\[\rat(G)\geq 60^{(\ncf(G)-k)/3},\] which yields
\begin{equation}\label{equation2}\ncf(G)-k\leq \frac{3\ln(\rat(G))}{\ln
60}.\end{equation}

Inequalities \ref{equation1} and \ref{equation2} then imply that
\[\ncf(G)\leq \frac{\ln (\rat(G))}{\ln q}+\frac{3\ln(\rat(G))}{\ln
60},\] and thus the proposition follows.
\end{proof}

We have proved Theorem~\ref{theorem-main} in the case $S\cong
\PSL_2(q)$.

%%%%%%%%%%%%%%%%%%%%%%%%%%%%%%%%%%%%%%%%%%%%%%%%%%%%%%%%%%%%%%%%%%%%%%%

\section{Other non-abelian simple groups}

We now handle the case $S\ncong \PSL_2(q)$.

\begin{proposition}\label{proposition-main3} Let $S$ be a non-abelian simple group different from $\PSL_2(q)$.
Assume that $N:=S\times\cdots\times S$ is a minimal normal subgroup
of $G$. Then $\ncf(G)<1.8\ln(\rat(G))$.
\end{proposition}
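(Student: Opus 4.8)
The plan is to run the arguments of Propositions~\ref{proposition-main} and~\ref{proposition-main2} in parallel, feeding in the much stronger pair of characters supplied by \cite[Theorem~1]{Cossey-Nguyen}. Since $S\ncong\PSL_2(q)$, that result furnishes non-principal $\alpha,\beta\in\Irr(S)$, both extendible to $\Aut(S)$, with $\alpha(1)/\beta(1)>|S|^{1/14}$. Write $N=S_1\times\cdots\times S_k$ and set $m=\ncf(G)$. Two elementary facts will drive the estimate: $\Al_7$, of order $2520$, is the smallest non-abelian simple group not isomorphic to any $\PSL_2(q)$, so $|S|\geq 2520$; and $\beta(1)\geq 2$ since $S$ is simple.

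First I would produce a \emph{small} degree exactly as in Proposition~\ref{proposition-main}. Because $\beta$ is $\Aut(S)$-invariant, the character $\beta\times 1_{S_2}\times\cdots\times 1_{S_k}$ has stabiliser $\bN_G(S_1)$, which has index $k$ and to which it extends; Clifford's theorem then gives an irreducible character of $G$ of degree $k\beta(1)$, so $c(G)\leq k\beta(1)$. Next I would produce a \emph{large} degree as in Proposition~\ref{proposition-main2}: by \cite[Lemma~1.3]{Mattarei} the character $\alpha\times\cdots\times\alpha$ extends to $\Aut(N)=\Aut(S)\wr\Sy_k$, hence to some $\chi\in\Irr(G)$ restricting to it on $N$, and Gallagher's theorem \cite[Corollary~6.17]{Isaacs1} shows $b(G/N)\alpha(1)^k$ is a degree of $G$, whence $b(G)\geq b(G/N)\alpha(1)^k$. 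Together these give
\[\rat(G)\geq\frac{b(G/N)\,\alpha(1)^k}{k\,\beta(1)}.\]

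To absorb the composition factors living in $G/N$ I would apply Theorem~\ref{theorem-Cossey...} to $G/N$: the product of the orders of its $m-k$ non-abelian composition factors is at most $b(G/N)^3$, and each such order is at least $60$, so $b(G/N)\geq 60^{(m-k)/3}$. Inserting this, together with $\alpha(1)^k>\beta(1)^k|S|^{k/14}$, $\beta(1)\geq 2$ and $|S|\geq 2520$, into the displayed inequality collapses everything to a single estimate
\[\ln\rat(G)>\frac{\ln 60}{3}\,(m-k)+\Bigl(\frac{\ln 2520}{14}+\ln 2\Bigr)(k-1)+\frac{\ln 2520}{14}-\ln k.\]

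The real obstacle is the final numerical optimisation: one must check that this inequality forces $m<1.8\ln\rat(G)$ for every admissible $k$ with $1\leq k\leq m$. Denoting the right-hand side by $f(m,k)$, it is enough to verify $m\leq 1.8\,f(m,k)$; using $k\leq m$ this reduces to a one-variable inequality of the shape $1.255\,m-1.248-1.8\ln m\geq 0$, which holds at every integer $m\geq 1$ but with almost no slack at $m=1,2$. This tightness is exactly what fixes the constant: already for $k=1$ with $G/N$ solvable one only obtains $m=1<\bigl(\tfrac{1}{14}\ln 2520\bigr)^{-1}\ln\rat(G)\approx 1.788\,\ln\rat(G)$, so $1.8$ is essentially forced by the exponent $1/14$ of \cite[Theorem~1]{Cossey-Nguyen} and the order of $\Al_7$, while the surplus $(k-1)\ln 2$ coming from $\beta(1)\geq 2$ is what keeps all larger $k$ comfortably below the bound.
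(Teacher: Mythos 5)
Your argument is correct and follows essentially the same route as the paper: the pair $\alpha,\beta$ from \cite[Theorem~1]{Cossey-Nguyen}, the large degree $b(G/N)\alpha(1)^k$ via Mattarei's lemma and Gallagher's theorem, Theorem~\ref{theorem-Cossey...} to get $b(G/N)\geq 60^{(\ncf(G)-k)/3}$, and the bound $|S|\geq 2520$. The only divergence is your small degree $c(G)\leq k\beta(1)$ obtained by Clifford induction as in Proposition~\ref{proposition-main}; the paper instead uses the product character $\beta\times\cdots\times\beta$ to get $c(G)\leq\beta(1)^k$, which removes the $\ln k$ term, avoids your delicate integer check at $m=1,2$, and collapses the estimate to the clean uniform bound $\ncf(G)<\frac{14}{\ln 2520}\ln(\rat(G))<1.8\ln(\rat(G))$.
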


Our current proof of Proposition~\ref{proposition-main3} is largely
based on Theorem~\ref{theorem-Cossey...} and the following result,
whose proofs both depend on the classification of finite simple
groups.

\begin{theorem}[Theorem 1 of \cite{Cossey-Nguyen}]\label{theorem-extending-character} Let $S$ be a non-abelian simple
group different from $\PSL_2(q)$. Then $S$ has two non-principal
irreducible characters $\alpha$ and $\beta$ which are extendible to
$\Aut(S)$ such that
\[\frac{\alpha(1)}{\beta(1)}>|S|^{1/14}.\]
\end{theorem}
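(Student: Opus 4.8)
The plan is to combine the character-extension technique of Propositions~\ref{proposition-main} and~\ref{proposition-main2} with Theorem~\ref{theorem-Cossey...}, now feeding in the strong pair of characters supplied by Theorem~\ref{theorem-extending-character}. Write $N=S_1\times\cdots\times S_k$ with each $S_i\cong S$, and set $m:=\ncf(G/N)$, so that $\ncf(G)=k+m$. The aim is to produce, in a single chain of inequalities, a lower bound for $\rat(G)$ that splits into a contribution of the form $|S|^{k/14}$ coming from the $k$ copies of $S$ inside $N$ and a contribution of the form $60^{m/3}$ coming from the non-abelian composition factors of $G/N$.

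First I would invoke Theorem~\ref{theorem-extending-character} to fix two non-principal $\alpha,\beta\in\Irr(S)$, each extendible to $\Aut(S)$, with $\alpha(1)/\beta(1)>|S|^{1/14}$; note $\alpha(1),\beta(1)\geq 2$ since $S$ is simple non-abelian. Exactly as in Proposition~\ref{proposition-main}, since $\alpha$ is $\Aut(S)$-invariant the product $\alpha\times\cdots\times\alpha\in\Irr(N)$ is invariant under $\Aut(N)=\Aut(S)\wr\Sy_k$ and hence, by \cite[Lemma~1.3]{Mattarei}, extends to $\Aut(N)$ and therefore to $G/\bC_G(N)$, giving $\chi\in\Irr(G)$ with $\chi|_N=\alpha\times\cdots\times\alpha$. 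Gallagher's theorem \cite[Corollary~6.17]{Isaacs1} then shows $\lambda\chi\in\Irr(G)$ for all $\lambda\in\Irr(G/N)$; taking $\lambda$ of degree $b(G/N)$ yields $b(G)\geq b(G/N)\,\alpha(1)^k$. Running the same extension argument with $\beta$ produces an irreducible character of $G$ of degree $\beta(1)^k\geq 2^k$, so $c(G)\leq\beta(1)^k$. Combining,
\[
\rat(G)\geq\frac{b(G/N)\,\alpha(1)^k}{\beta(1)^k}=b(G/N)\Big(\tfrac{\alpha(1)}{\beta(1)}\Big)^k>b(G/N)\,|S|^{k/14}.
\]

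Next I would bound $b(G/N)$ from below using Theorem~\ref{theorem-Cossey...} applied to $G/N$: the product of the orders of the $m$ non-abelian composition factors of $G/N$ is at most $b(G/N)^3$, and each such order is at least $60$, so $b(G/N)\geq 60^{m/3}$. Feeding this into the displayed inequality and taking logarithms gives
\[
\ln\rat(G)>\frac{k}{14}\ln|S|+\frac{m}{3}\ln 60 .
\]
It then remains to check that $k+m\leq 1.8\big(\frac{k}{14}\ln|S|+\frac{m}{3}\ln 60\big)$, for which it suffices that both coefficients $\frac{1.8}{14}\ln|S|$ and $\frac{1.8}{3}\ln 60$ are at least $1$.

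The coefficient of $m$ is $0.6\ln 60\approx 2.46\geq 1$ with room to spare, so the genuine constraint—and the main obstacle—is the coefficient of $k$: I need $\frac{1.8}{14}\ln|S|\geq 1$, i.e. $\ln|S|\geq 14/1.8\approx 7.78$. This is exactly where the hypothesis $S\not\cong\PSL_2(q)$ is indispensable and where the constant $1.8$ is forced, since the smallest non-abelian simple group that is not a $\PSL_2(q)$ is $\Al_7$, of order $2520$; hence $\ln|S|\geq\ln 2520\approx 7.83>7.78$ and the inequality holds (only just). It follows that $k+m<1.8\ln\rat(G)$, as required. The single point deserving care is the order census establishing $|S|\geq 2520$ under the standing hypothesis; everything else is a formal repetition of the extension-plus-Gallagher mechanism already used in the $\PSL_2(q)$ case.
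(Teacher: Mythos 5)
Your proposal does not prove the statement it was meant to prove. The statement is Theorem~\ref{theorem-extending-character} itself, i.e.\ Theorem~1 of \cite{Cossey-Nguyen}: for every non-abelian simple group $S\ncong\PSL_2(q)$ there exist two non-principal characters $\alpha,\beta\in\Irr(S)$, both extendible to $\Aut(S)$, with $\alpha(1)/\beta(1)>|S|^{1/14}$. Your very first step is ``invoke Theorem~\ref{theorem-extending-character} to fix $\alpha,\beta$,'' which assumes precisely the conclusion to be established; as a proof of the statement the argument is circular, and nothing in the rest of your write-up touches its actual content: no irreducible characters of $S$ are constructed, no extendibility to $\Aut(S)$ is verified, and the exponent $1/14$ is never derived---it is only consumed. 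What you have written instead is, essentially verbatim, the paper's proof of Proposition~\ref{proposition-main3}: the extension of $\alpha\times\cdots\times\alpha$ via \cite[Lemma~1.3]{Mattarei}, the Gallagher step giving $b(G)\geq b(G/N)\alpha(1)^k$, the bound $c(G)\leq\beta(1)^k$, the application of Theorem~\ref{theorem-Cossey...} to $G/N$, and the numerical check that reduces to $\frac{1.8}{14}\ln 2520>1$ (equivalently, the paper's $\rat(G)>2520^{\ncf(G)/14}$). That downstream computation is correct, but it is the application of the theorem, not the theorem.

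Note also that the paper itself offers no proof of this statement: it is quoted from \cite{Cossey-Nguyen}, where it is established by a CFSG-dependent case analysis over the simple groups---for groups of Lie type one takes for $\alpha$ a large-degree character such as the Steinberg character (extendible to $\Aut(S)$, cf.\ \cite{Feit}) paired with a suitable small-degree character $\beta$ known to extend, with the exponent $1/14$ calibrated against the worst Lie-type families; alternating groups $\Al_n$ (where $\Aut(\Al_n)=\Sy_n$ for $n\neq 6$, so one needs characters labelled by partitions stable under the outer action) and sporadic groups are handled separately. None of this can be recovered from the group-$G$-level machinery you deploy: Mattarei's lemma, Gallagher's theorem, and Theorem~\ref{theorem-Cossey...} manipulate characters of $N$ and $G$ once $\alpha$ and $\beta$ are handed to you, and are indifferent to where they come from. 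The one genuinely relevant observation you make---that the smallest non-abelian simple group not of the form $\PSL_2(q)$ is $\Al_7$ of order $2520$---belongs to the proof of Proposition~\ref{proposition-main3}, not to a proof of the existence of $\alpha$ and $\beta$.
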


\begin{proof}[Proof of Proposition~\ref{proposition-main3}] By Theorem~\ref{theorem-extending-character}, the simple group $S$
has two non-principal irreducible characters $\alpha$ and $\beta$
which are extendible to $\Aut(S)$ such that
$$\frac{\alpha(1)}{\beta(1)}>|S|^{1/14}.$$ The product
characters $\alpha_1:=\alpha \times \cdots \times \alpha$ and
$\beta_1:=\beta \times \cdots \times \beta$ are then invariant under
$\Aut(N)=\Aut(S)\wr \Sy_k$. Arguing as before, we see that
$\alpha(1)^k$ and $\beta(1)^k$ are irreducible character degrees of
$G$.

As in the proof of Proposition~\ref{proposition-main2}, one can show
that $b(G/N)\alpha(1)^k$ is an irreducible character degree of $G$.
Together with the conclusion of the last paragraph, we have
\[\rat(G)\geq
\frac{b(G/N)\alpha(1)^k}{\beta(1)^k}.\] As
$\alpha(1)/\beta(1)>|S|^{1/14}$, we obtain
\[\rat(G)>b(G/N)|S|^{k/14},\] and hence
\[\rat(G)/|S|^{k/14}>b(G/N).\]
Applying Theorem~\ref{theorem-Cossey...} again, we then have that
$\rat(G)^3/|S|^{3k/14}$ is at least the product of the orders of
non-abelian composition factors of $G/N$.

Since the number of non-abelian composition factors of $G/N$ is
precisely equal to $\ncf(G)-\ncf(N)=\ncf(G)-k$, it follows that
\[\frac{\rat(G)^3}{|S|^{3k/14}}>{60}^{\ncf(G)-k}.\] Note that the order of any
$S\neq \PSL_2(q)$ is at least $2\,520$. We deduce that
\[\rat(G)> 2\,520^{\ncf(G)/14},\] which in turns implies that
\[\ncf(G)<1.8\ln(\rat(G)),\] as claimed.

\end{proof}

%%%%%%%%%%%%%%%%%%%%%%%%%%%%%%%%%%%%%%%%%%%%%%%%%%%%%%%%%%%%%%%%%%%%%%%%

\section{Proof of the main result}

\begin{proof}[Proof of Theorem~\ref{theorem-main}] As mentioned in the Introduction, to prove the theorem it suffices to assume that $\Oinfty(G)=1$.
Let $N$ be a minimal normal subgroup of $G$. As $G$ has trivial
solvable radical, $N$ is direct product of copies of a non-abelian
simple group $S$.

When $S=\PSL_2(q)$ and $G/N$ is solvable we have
$\ncf(G)<\ln(\rat(G))+1.3$ by Proposition~\ref{proposition-main}.
Otherwise we have the bound $\ncf(G)<1.8\ln(\rat(G))$ by
Propositions~\ref{proposition-main2} and~\ref{proposition-main3}.
The theorem is now completely proved.
\end{proof}

\section*{Acknowledgement} The authors are grateful to Alexandre Turull for
raising a question on the character degree ratio that leads to the
main result of this paper.

%%%%%%%%%%%%%%%%%%%%%%%%%%%%%%%%%%%%%%%%%%%%%%%%%%%%%%%%%%%%%%%%%%%%%%%%%%%%%%%%%%%%%%%%%%%%

\end{document}